\numberwithin{figure}{section}
\numberwithin{equation}{section}
\numberwithin{table}{section}
\patchcmd{\thebibliography}{\section*}{\section}{}{}
\newcommand{\ef}{\end{equation}}
\chardef\bslash=`\\ 
\newcommand*\colvec[3][]{
    \begin{pmatrix}\ifx\relax#1\relax\else#1\\\fi#2\\#3\end{pmatrix}
}
\newtheorem*{thm*}{Theorem}
\newtheorem{lem}{Lemma}[section]
\newtheorem*{lem*}{Lemma}
\newtheorem{corl}{Corollary}[lem]
\newtheorem*{corl*}{Corollary}
\newtheorem{prop}{Proposition}[section]
\newtheorem{prop*}{Proposition}
\theoremstyle{definition}
\newtheorem{defn}{Definition}[section]
\newtheorem{examp}{Example}
\newtheorem*{examp*}{Example}
\newtheorem*{remark*}{Remark}
\newtheorem*{CC*}{Crossover Conjecture}
\newtheorem*{Note*}{Note}
\newtheorem*{defn*}{Definition}
 \theoremstyle{remark}
 \renewcommand{\sectionmark}[1]{}
\newcommand{\la}{\langle}
\newcommand{\ra}{\rangle}
\newcommand{\defect}{\operatorname{def}}
\newcommand{\hub}{\operatorname{hub}}
\newcommand{\cont}{\operatorname{cont}}
\renewcommand{\a}{\alpha}
\begin{document}

\vspace{-0.8cm}
\begin{center}
\Large \textbf{\noindent
External vertices for crystals of type A}
\\
\vspace{0.5cm}
\normalsize
\normalsize{
 Ola Amara-Omari$^1$,  Mary Schaps$^2$
} \\

\textit{\footnotesize
$^1$ Partially supported by Ministry of Science, Technology and Space fellowship , at Bar-Ilan University, Ramat-Gan, Israel
olaomari77@hotmail.com\\
$^2$ Bar-Ilan University, Ramat-Gan, Israel,
mschaps@mathiu.ac.il\\
}
\vspace{5mm}

\end{center}

 \begin{abstract} 
We demonstrate that there are only a finite number of Moria equivalence classes of blocks of cyclotomic Hecke algebras of type A, by combining some combinatorics with the Chuang-Rouquier categorification for the simple modules of the cyclotomic Hecke algebras of type $A$ and rank $e$.  This is an extension of a proof for symmetric groups of a conjecture known as Donovan's conjecure.

The blocks of cyclotomic Hecke algebras for a given dominant integral weight $\Lambda$ correspond to the weights $P(\Lambda)$ of a highest weight representation with highest weight $\Lambda$. We connect these weights into a graph we call the reduced crystal $\hat P(\Lambda)$, in which vertices are connected by $i$-strings. We show that a vertex is  $i$-external for a residue $i$ if the defect is less than the absolute value of the  $i$-component of the hub.  
We demonstrate the existence of a bound  on the degree after which all vertices of a given defect $d$ are external in at least one $i$-string.  For $e=2$, we calculate an approximation to this bound.
\end{abstract}

\section{INTRODUCTION}

For an affine Lie algebra of type A and rank $e$, we take a dominant integral weight $\Lambda$ and let $V(\Lambda)$ with the corresponding  highest weight module with set of weights $P(\Lambda)$.  This can be made into a graph   $\hat P(\Lambda)$ by adding edges labelled by residues $i=0,1,\dots, e-1$ determined by the operators $\tilde e_i$ and $\tilde f_i$ of the corresponding Kashiwara crystal $B(\Lambda)$. The Weyl group $W$ of the affine Lie algebra acts on this graph by reflecting $i$-strings. To each vertex there correspond various numerical invariants, including a defect, a content, a degree and a hub, whose definitions will be given in \S2.

In this paper, we first give a numerical criterion for a vertex of defect $d$ to be at the end  of an $i$-string. For any positive integer $n$, there are only a finite number of vertices of degree $\leq n$.  Two vertices at the two ends of an $i$-string are mapped into each other by the action of the Weyl group, one end being the high-degree end and the other the low-degree end.   We then show that there are only a finite number of vertices of any defect $d$ which are not at the high-degree end of an $i$-string for some $i$.  Combining this with Chuang-Rouquier categorification of the cyclotomic Hecke algebras
$H^\Lambda_n$, we conclude that there are only a finite number of Morita equivalence classes of blocks of cyclotomic Hecke algebras, proving a version of Donovan's conjecture for the cyclotomic blocks in type $A$.

For the case  $e=2$, we give explicit formulae for all the invariants and an explicit bound on the degree after which all cyclotomic Hecke algebra blocks are Morita equivalent to blocks of lower degree.

\section{DEFINITIONS AND NOTATION}

Let $\mathfrak g$ be the affine Lie algebra $A^{(1)}_{\ell}$ as in \cite{Ka} and let $e=\ell+1$. Let $C$ be the Cartan matrix, and $\delta$ the null root. Let $\Lambda$ be a dominant integral weight, let $V(\Lambda)$ be a highest weight module with that 
highest weight, and let $P(\Lambda)$ be the set of weights of $V(\Lambda)$. Let $Q$ be the $\mathbb Z$-lattice generated by the simple roots,
\[
\a_0,\dots,\a_{\ell}.
\]
Let  $Q_+$ be the subset of $Q$ in which all coefficients are non-negative. 

The weight space $P$ of the affine Lie algebra has two different bases.  One is given by the fundamental weights together with the null root, $\Lambda_0,\dots, \Lambda_{e-1}, \delta$, 
and one is given by $\Lambda_0, \a_0,\dots,\a_{e-1}$.  We will usually use the first basis for our weights.  In type A, if $\Lambda=a_0\Lambda_0+a_1\Lambda_1+\dots a_\ell\Lambda_\ell$, we let $r=a_0+a_1+ \dots +a_\ell$ and call it the level.

The Cartan matrix in type $A$ is symmetric, and thus induces, through duality,  a symmetric product on the weight space.  As in  \cite{Kl} we define the defect of a weight $\lambda=\Lambda -\alpha$ by 

\[
\defect(\lambda)=\frac{1}{2}((\Lambda \mid \Lambda)-  (\lambda \mid \lambda))=(\Lambda \mid \alpha)-\frac{1}{2}(\alpha \mid \alpha).
\]

Since we are in a highest weight module, we always have $(\Lambda \mid \Lambda) \geq (\lambda \mid \lambda)$, and the defect is in fact an integer for the affine Lie algebras of type $A$ treated in this paper. The weights of defect $0$ are those lying in the Weyl group orbit of $\Lambda$.

 Every weight $\lambda \in P(\Lambda)$ has the form $\Lambda-\alpha$, for $\alpha \in Q_+$. If $\alpha=\sum_{i=0}^\ell c_i \alpha_i$, for all $c_i$ non-negative, then the vector 
\[
\cont(\lambda)=(c_0.\dots,c_\ell)
\]
is called the content of $\lambda$.

Define
\[
\max (\Lambda)=\{\lambda \in P(\Lambda) \mid \lambda + \delta \not\in P(\Lambda)\},
\] 
and by \cite{Ka}, every element of 
$P(\Lambda)$ is of the form $\{y+k\delta \mid y \in \max P(\Lambda), k \in \mathbb Z_{ \geq 0}\}$.
Let $W$ denote the Weyl group
\[
W=T \rtimes  \mathring{W}
\] 
expressed as a semidirect product of 
a normal abelian subgroup $T$ by the finite Weyl group given by crossing out the first row and column of the Cartan matrix.  The elements of $T$ are transformations of the form

\[
t_\alpha(\zeta)=\zeta+r\alpha -((\zeta|\alpha)+\frac{1}{2}(\alpha|\alpha)r)\delta,
\]
\noindent where for type A, the weights $\a$ are taken from the $\mathbb Z$-lattice generated by $\a_1, \dots ,\a_{\ell}$, omitting $\a_0$.

\begin{defn} For any weight $\lambda$ in the set of weights  $P(\Lambda)$ for a dominant integral weight $\Lambda$, we let $\hub(\lambda)=(\theta_0,\dots, \theta_{e-1})$ be the hub of $\lambda$, where 
	\[
	\theta_i=\la \lambda,\alpha_i^\vee \ra
	\]
	\noindent The hub is the projection of the weight of $\lambda$ onto the subspace of the weight space generated by the fundamental weights \cite{Fa}.
\end{defn} 
\noindent We recall that for type $A$,  with null root $\delta=\alpha_0+\alpha_1+\dots+\alpha_{e-1}$, the symmetric product is 
\[
(\lambda|\alpha_i)=\langle \lambda,\alpha_i^\vee \rangle
\] 

By the ground-breaking work of Chuang and Rouquier \cite{CR}, the highest weight module $V(\Lambda)$ can be categorified.  The weight spaces lift to categories of representations of  blocks of cyclotomic Hecke algebras, the basis vectors lift to simple modules,  the Chevalley generators $e_i,f_i$ lift to restriction and induction functors $E_i,F_i$, and the simple reflections in the Weyl group lift to derived equivalences, which in a few important cases are actually Morita equivalences.
It follows from the work of Scopes \cite{Sc} and generalizations by Chuang and Rouquier \cite{CR}, that if we have weights at two ends of a string in a sense we will shortly make explicit, then acting on the blocks of the cyclotomic Hecke algebra  by the Weyl group will produce a Morita equivalence.

\section{EXTERNAL VERTICES OF THE REDUCED CRYSTAL}

The Kashiwara crystal, $B(\Lambda)$ \cite{K1} \cite{K2}, is a basis for $V(\Lambda)$ with some additional properties of which the only one of importance at the moment is the existence of operators $\tilde e_i$ and $\tilde f_i$ between basis elements. The set $P(\Lambda)$ can be taken as the set of vertices of a graph $\hat P(\Lambda)$ which we will call the reduced crystal, as in \cite{AS} or  \cite{BFS}.  Two vertices will be connected by an edge of residue $i$ if there are two basis elements with those weights connected in the Kashiwara crystal by $\tilde e_i$ or $\tilde f_i$. A finite set of vertices connected by edges of residue $i$ will be called an $i$-string.

\begin{defn}We say that an element $b$  of $B(\Lambda)$ is $i$-\textit{external} if for any  $i$ for which  $\theta_i > 0$, then 
  $\tilde  e_i(b)=0 $ and if   $\theta_i < 0$, then  $\tilde  f_i(b)=0 $ 
\end{defn}

\begin{lem}\label{defect}For a dominant integral weight 
	$\Lambda=a_0\Lambda_0+a_1\Lambda_1+\dots+a_\ell \Lambda_{\ell}$
	in a  crystal  $B(\Lambda)$, if $\eta=\Lambda-\alpha$ for $\alpha \in Q_+$ has a non-negative $i$ component $w=\theta_i$ in its hub, then
	\begin{enumerate}
		\item   The defect of $\lambda=\eta-k\alpha_i$ for $ 0 \leq k \leq w$ is
		$	\defect(\eta)+k(w-k)$.
		\item  The absolute values of the differences between the defects along the $i$-string from $\lambda$ to $\lambda'$ for $w$ odd are 
		\[
		w-1,w-3,\dots,4,2,0,2,4,\dots, w-3,w-1
		\]
		and for $w$ even are 
		\[
		w-1,w-3,\dots,3,1,1,3,\dots, w-3,w-1
		\]	
	\end{enumerate}	
\end{lem}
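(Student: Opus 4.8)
The plan is to compute the defect of $\lambda = \eta - k\alpha_i$ directly from the quadratic formula $\defect(\lambda) = (\Lambda\mid\Lambda-\lambda) - \tfrac12(\Lambda-\lambda\mid\Lambda-\lambda)$, exploiting that $\Lambda-\lambda = \alpha + k\alpha_i$ and that the bilinear form is controlled by the Cartan matrix of type $A$. First I would set $\alpha = \Lambda - \eta$ and expand
\[
\defect(\eta - k\alpha_i) = (\Lambda \mid \alpha + k\alpha_i) - \tfrac12(\alpha + k\alpha_i \mid \alpha + k\alpha_i).
\]
Expanding the quadratic term gives $\tfrac12(\alpha\mid\alpha) + k(\alpha\mid\alpha_i) + \tfrac12 k^2(\alpha_i\mid\alpha_i)$, and the linear term gives $(\Lambda\mid\alpha) + k(\Lambda\mid\alpha_i)$. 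Collecting the $k$-free part reproduces $\defect(\eta)$, so it remains to show the remaining terms equal $k(w-k)$.

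The key identity is that $(\alpha_i\mid\alpha_i) = 2$ in type $A$ (all roots have the same squared length, normalized to $2$), so the $k^2$ coefficient is $\tfrac12\cdot 2 = 1$, contributing $-k^2$. For the linear-in-$k$ coefficient I would use the definition of the hub: $\theta_i = \langle \eta, h_i\rangle = \langle \Lambda - \alpha, h_i\rangle$, and since the form is the one induced by the symmetric Cartan matrix we have $\langle \mu, h_i\rangle = (\mu \mid \alpha_i)$ up to the standard normalization, hence $(\Lambda\mid\alpha_i) - (\alpha\mid\alpha_i) = \theta_i = w$. Therefore the linear coefficient is exactly $w$, contributing $+kw$, and altogether the $k$-dependent part is $kw - k^2 = k(w-k)$, proving (1). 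One must also note that $\lambda = \eta - k\alpha_i$ genuinely lies in $P(\Lambda)$ for $0 \le k \le w$: this is a standard crystal fact, since the $i$-string through $\eta$ has $\theta_i = w \ge 0$ free $\tilde f_i$-moves available.

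For part (2), I would simply take successive differences of the sequence $\defect(\eta) + k(w-k)$ for $k = 0, 1, \dots, w$. The difference between consecutive defects is $(k+1)(w-k-1) - k(w-k) = w - 2k - 1$, so as $k$ runs from $0$ to $w-1$ the signed differences are $w-1, w-3, \dots, -(w-3), -(w-1)$; taking absolute values folds this into the stated palindromic lists, where the parity of $w$ determines whether the middle of the list is $\dots,4,2,0,2,4,\dots$ (the term $w-2k-1=0$ occurs, forcing $w$ odd) or $\dots,3,1,1,3,\dots$ (no zero, $w$ even). This is a routine finite check once the formula in (1) is in hand.

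The main obstacle is bookkeeping with the normalization of the bilinear form versus the pairing $\langle\,\cdot\,,h_i\rangle$: one has to be careful that the "duality" identifying the hub component $\theta_i$ with $(\eta\mid\alpha_i)$ is the correct one for the affine type $A$ form used in the paper (where $(\alpha_i\mid\alpha_i)=2$ and $\delta$ is in the radical), and in particular that the $\delta$-component of $\alpha$ does not interfere — it does not, since $(\delta\mid\alpha_i)=0$. Once that identification is pinned down, everything else is elementary algebra.
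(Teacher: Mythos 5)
Your proposal is correct and follows essentially the same route as the paper: you expand $\defect(\eta-k\alpha_i)=(\Lambda\mid\alpha+k\alpha_i)-\tfrac12(\alpha+k\alpha_i\mid\alpha+k\alpha_i)$, identify the linear-in-$k$ coefficient with the hub component $w=\la \Lambda-\alpha,\alpha_i^\vee\ra$ using $(\alpha_i\mid\alpha_i)=2$, and obtain part (2) by taking consecutive differences $(k+1)(w-k-1)-k(w-k)=w-1-2k$, exactly as in the paper's proof. Your added remark that $\eta-k\alpha_i$ stays in $P(\Lambda)$ for $0\le k\le w$ (via the standard string-length fact for $\theta_i\ge 0$) is a point the paper leaves implicit, and is a reasonable small supplement.
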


\begin{proof}
	\begin{enumerate}
		\item 	We simply compute the defect explicitly for the case of $\alpha_i$, using $(\alpha_i|\alpha_i)=2$, 
		\begin{align*}
			\defect(\lambda)&=(\Lambda|\alpha+k\alpha_i)
		-\frac{1}{2}(\alpha+k\alpha_i|\alpha+k\alpha_i)\\
		&=(\Lambda|\alpha)
		-\frac{1}{2}(\alpha|\alpha)+(\Lambda|k\alpha_i)
		-(\alpha|k\alpha_i)
		-\frac{1}{2}(k\alpha_i|k\alpha_i)\\
		&=\defect(\eta)+k((\Lambda|\alpha_i)-(\alpha|\alpha_i))-k^2\\
		&=\defect(\eta)+k((\Lambda-\alpha)|\alpha_i)-k^2\\
		&=\defect(\eta)+k(\la \Lambda-\alpha),\alpha_i^\vee\ra)-k^2\\
		&=\defect(\eta)+k(w-k)	
		\end{align*}
		\item For $0 \leq k \leq w-1$, we have the difference
		\begin{align*}
		|(k+1)(w-k-1)-k(w-k)|&=|kw-k^2-k+w-k-1-kw+k^2|\\
		&=|(w-1)-2k|
		\end{align*} 	
		
	\end{enumerate}
\end{proof}
In a reduced crystal for $\Lambda$, the $i$-string from $\Lambda$ has length $a_i$ . The weights $\Lambda-k\alpha_i$ necessarily lie in $\max(\Lambda)$ since  
$\Lambda-k\alpha_i+\delta$ cannot lie in $P(\Lambda)$ because the coefficient of $\alpha_j$  is negative for every $j \neq i$ and thus $k \alpha_i-\delta \notin Q_+$.
\begin{corl}\label{defect}
	In a  crystal with $\Lambda=a_0\Lambda_0 +\dots+ a_\ell\Lambda_\ell$, the defect of $\lambda=\Lambda-k\alpha_i$ for $ 0 \leq k \leq a_i$ is $k(a_i-k)$.
\end{corl}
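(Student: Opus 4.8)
The corollary is the special case $\eta = \Lambda$ of part (1) of the preceding lemma. The plan is simply to verify that $\Lambda$ has the two properties required to apply that lemma with $\eta = \Lambda$: first, that $\Lambda = \Lambda - \alpha$ with $\alpha = 0 \in Q_+$, which is immediate; and second, that the $i$-component of the hub of $\Lambda$ is $\theta_i = \langle \Lambda, h_i\rangle = a_i \geq 0$, which holds because $\Lambda$ is dominant integral, so in the notation of the lemma $w = a_i$.

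With these checks in place, \lemref{defect}(1) applies with $\eta = \Lambda$ and tells us that for $0 \leq k \leq a_i$ the defect of $\lambda = \Lambda - k\alpha_i$ equals
\[
\defect(\Lambda) + k(a_i - k).
\]
It remains only to note that $\defect(\Lambda) = 0$: indeed, by definition $\defect(\Lambda) = \tfrac12((\Lambda\mid\Lambda) - (\Lambda\mid\Lambda)) = 0$, consistent with the remark that the weights of defect $0$ are exactly those in the Weyl group orbit of $\Lambda$. Substituting gives $\defect(\Lambda - k\alpha_i) = k(a_i - k)$, as claimed.

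There is really no obstacle here; the only thing to be careful about is that the lemma requires $\lambda = \eta - k\alpha_i$ to actually be a weight in $P(\Lambda)$ (so that talking about its defect and its position on an $i$-string makes sense). This is covered by the paragraph immediately following the lemma, which observes that in the reduced crystal the $i$-string emanating from $\Lambda$ has length $a_i$ and that each $\Lambda - k\alpha_i$ for $0 \leq k \leq a_i$ indeed lies in $\max(\Lambda) \subseteq P(\Lambda)$. Hence the corollary follows directly.
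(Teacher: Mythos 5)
Your proof is correct and follows the paper's own route exactly: it specializes Lemma \ref{defect}(1) to $\eta=\Lambda$, using $\defect(\Lambda)=0$ and $\hub(\Lambda)=[a_0,\dots,a_\ell]$, which is precisely the paper's one-line argument, with your version only adding the routine checks (that $\alpha=0\in Q_+$, that $a_i\geq 0$ by dominance, and that the weights $\Lambda-k\alpha_i$ lie in $P(\Lambda)$).
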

\begin{proof}
	$\defect(\Lambda)=0$ and $\hub(\Lambda)=[a_0,\dots,a_\ell]$, so $w=a_i$.
\end{proof}

\begin{prop}\label{bound} Let $\Lambda$ be a dominant integral weight.  For any positive integer $d$,  a weight of defect $d$  is $i$-external if  $d \leq |\theta_i|$.
\end{prop}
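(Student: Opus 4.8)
The plan is to analyze a single $i$-string through the given weight of defect $d$ and show that this weight cannot sit in the "interior" of that string when $d\le|\theta_i|$. By the definition of $i$-external, I must show that if $\theta_i>0$ then $\tilde e_i$ kills the weight, and if $\theta_i<0$ then $\tilde f_i$ kills it; the case $\theta_i=0$ is vacuous since there is nothing to check. By the symmetry $\tilde e_i\leftrightarrow\tilde f_i$ under $\eta\mapsto$ the opposite end of the string (equivalently, replacing $\theta_i$ by $-\theta_i$), it suffices to treat the case $\theta_i = w > 0$, and to show that a weight $\eta$ of defect $d$ with $\theta_i=w>0$ and $d\le w$ must be the top of its $i$-string, i.e. $\tilde e_i(\eta)=0$.

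First I would set up the $i$-string containing $\eta$: let $\eta$ be at position $k$ from the top, so the top of the string is some weight $\eta' = \eta + k\alpha_i$ with hub $i$-component $w + 2k$ (each step up an $i$-string decreases the coefficient of $\alpha_i$ and raises $\theta_i$ by $\langle\alpha_i,h_i\rangle = 2$), and the string has length $w+2k$. Now apply \lemref{defect}(1) with $\eta'$ in the role of "$\eta$" there: the defect along the string is $\defect(\eta') + j\big((w+2k) - j\big)$ for $0\le j\le w+2k$, and our weight $\eta$ is the entry at $j=k$, so
\[
d = \defect(\eta) = \defect(\eta') + k\big((w+2k)-k\big) = \defect(\eta') + k(w+k).
\]
Since defects are non-negative, $\defect(\eta')\ge 0$, hence $d \ge k(w+k)$. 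If $k\ge 1$ then $k(w+k)\ge 1\cdot(w+1) = w+1 > w \ge d$, a contradiction. Therefore $k=0$, which says exactly that $\eta$ is the top of its $i$-string, so $\tilde e_i(\eta)=0$, as required.

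The only real point needing care is the bookkeeping claim that moving up one step along an $i$-string raises $\theta_i$ by exactly $2$ and that a weight with $\theta_i = w>0$ sitting at depth $k$ from the top has the top at $i$-component $w+2k$ with total string length $w+2k$; this is the standard $\mathfrak{sl}_2$-string structure of a Kashiwara crystal, where $\varepsilon_i(\eta)=k$, $\varphi_i(\eta)=w$ (recall $\varphi_i - \varepsilon_i = \theta_i$ and $\varepsilon_i+\varphi_i = $ string length), so I would invoke the crystal axioms here rather than reprove them. Everything else is the direct substitution into \lemref{defect}(1) and the trivial inequality $k(w+k)>w$ for $k\ge1$; there is no serious obstacle. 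I would also remark that this recovers and slightly sharpens the picture in \corref{defect}: for $\Lambda$ itself ($d=0$) every $i$-string has $\Lambda$ at the top, consistent with $k=0$ being forced.
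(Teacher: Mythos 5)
Your argument is correct and is essentially the paper's own approach: both rest on the defect formula of Lemma 3.1 along an $i$-string together with non-negativity of the defect, the paper comparing the vertex with its adjacent neighbour (so a non-$i$-external vertex forces $d\ge|\theta_i|+1$) while you compare it with the top of the string (so $d\ge k(w+k)\ge w+1$ for $k\ge 1$). The only blemish is a harmless slip in your closing aside: at depth $k$ from the top one has $\varphi_i(\eta)=w+k$, not $w$, as your own relation $\varphi_i-\varepsilon_i=\theta_i$ shows; the main computation is unaffected.
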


\begin{proof}
 A weight of defect $d$ can only be an internal vertex of an $i$-string if there exists a neighboring vertex $\lambda'$ of lower defect $d'$.  Let $w$ be the absolute value of the  $i$-component of the hub of $\lambda'$. The absolute value of the $i$-component of the hub decreases when the defect increases,  and then $\lambda$ will have $i$-component of absolute value $w-2$, since weights increase or decrease by $2$ along an $i$-string. By Lemma 3.1.2, we have  $d-d'=w-1$. Since $d'\geq 0$ we have $d \geq w-1$. Since the $i$-hub $\theta_i$ of $\lambda$ is of absolute value $w-2$ , we have $d \geq |\theta_i|+1$. Thus, whenever $d \leq |\theta_i|$, the vertex must be $i$-external. 
\end{proof}

In order to show that for any $d$ there is a bound $N(d)$ on the degree $n$ such that every Morita equivalence class of blocks of $H^\Lambda_n$ appears in some degree $n < N$, we need to use the methods of \cite{BFS}.  In order to make our treatment self-contained, we review the necessary results.  The set $\max(\Lambda)$ is parametrized by an integral lattice $M$, which for type A corresponds to $\mathbb Z^\ell$.  
We let $m=(m_1,m_2,\dots,m_\ell)$ be an element of $M$ given in this parametrization.  We then form an element of the integral weight space
$P$ given by $\Lambda-(m_1\alpha_1+\dots+m_\ell \alpha_\ell)$.  By the main theorem of \cite {BFS}, there is an integer $s(m)$ such that 
\[
\eta_m=\Lambda-(m_1\alpha_1+\dots+m_\ell \alpha_\ell)-s(m)\delta \in \max(\Lambda)
\]
We choose our simple roots such that  $\alpha_0 = -\alpha_1-\alpha_2-\dots-\alpha_\ell+\delta$. Solving this for $\delta$ and substituting, we get
\[
\eta_m=\Lambda-(s(m)\alpha_0+(s(m)+m_1)\alpha_1+\dots+(s(m)+m_\ell) \alpha_\ell)
\] 
From this we obtain the content
\[
\cont(\eta_m)=(s(m),s(m)+m_1,\dots,s(m)+m_\ell)
\] 
and, setting $m_0=0$, with indices taken modulo $e=\ell+1$,  the hub is
\[
\hub(\eta_m)=(a_i-(2m_i-m_{i-1}-m_{i+1}))_{i=0}^\ell
\] 
Thus the components of the hubs are  linear transformations of the components of the content.

\begin{prop}
For any defect $d$, there is a minimal degree $N(d)$ such that every occurence of that defect in degree $n \geq N(d)$ is at the end of a string  to a vertex of lower degree.
\end{prop}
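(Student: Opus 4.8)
The plan is to reduce the statement to a counting argument over the lattice $M \cong \mathbb{Z}^\ell$ parametrizing $\max(\Lambda)$. First I would observe that by the description of $P(\Lambda)$ as $\{y+k\delta \mid y \in \max(\Lambda),\ k \geq 0\}$, and since adding $k\delta$ raises the degree by $ek$ without changing the defect or the hub, it suffices to prove the statement for weights in $\max(\Lambda)$ — any higher occurrence of a given defect is obtained by adding multiples of $\delta$ to a maximal one, and a string at the maximal level lifts to a string at every higher level connecting to a vertex of correspondingly lower degree. So fix a defect $d$ and consider the set $S_d = \{\eta_m : m \in M,\ \defect(\eta_m) = d\}$.

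Next I would use \corref{defect} and \lemref{defect} together with \propref{bound}: a vertex $\eta_m$ of defect $d$ fails to be $i$-external for every $i$ only if $|\theta_i| \leq d - 1$ for all $i$, i.e. the hub $\hub(\eta_m) = (2m_i - m_{i-1} - m_{i+1})_{i=0}^\ell$ lies in the finite box $\{(\theta_0,\dots,\theta_\ell) : |\theta_i| \leq d-1\}$ (intersected with the hyperplane $\sum \theta_i = $ const forced by the weight lying in $P(\Lambda)$). Since the hub is a fixed linear transformation of $(m_1,\dots,m_\ell)$ with one-dimensional kernel spanned by $(1,1,\dots,1)$ (translation by $\delta$), the preimage of this finite box is contained in a union of finitely many lines in the direction $(1,\dots,1)$. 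Along each such line the content $\cont(\eta_m) = (s(m), s(m)+m_1, \dots, s(m)+m_\ell)$ has a well-defined minimal degree, because $s(m)$ is chosen so that $\eta_m \in \max(\Lambda)$ and hence the content entries are bounded below by $0$; the degree $n = \sum_i (s(m)+m_i)$ is then a non-constant affine function along the line whose minimum over the relevant integer points exists. Taking $N(d)$ to be one more than the maximum of these finitely many minimal degrees (and also at least the maximal degree at which \emph{any} defect-$d$ vertex of $\max(\Lambda)$ with $d \le |\theta_i|$ already occurs, to be safe) gives the bound: every defect-$d$ vertex in degree $n \geq N(d)$ either is external by \propref{bound}, or lies strictly above a maximal vertex to which it connects by a $\delta$-shifted string ending in lower degree.

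The main obstacle I expect is the last point: showing that a non-external vertex in $\max(\Lambda)$ of large degree actually sits at the \emph{high}-degree end of its $i$-string, so that the reflection carries it to something of strictly \emph{lower} degree (not merely of lower defect). Being non-external on the "wrong" side of the string does not immediately help; one needs that for the finitely many bad $m$, once the degree is large the neighbor of lower defect along some $i$-string is also of lower degree. Here I would argue that moving one step along an $i$-string changes the content in exactly two coordinates ($c_i \mapsto c_i \pm 1$ in a way determined by the sign of $\theta_i$), hence changes the degree by a bounded amount, while the vertices of defect $<d$ in a fixed box of hubs occur only in bounded degree; so beyond a computable threshold the step toward lower defect is forced to be the step toward lower degree. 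Quantifying this threshold — essentially bounding the degree of all vertices of defect $< d$ whose hub lies in the box $|\theta_i| \le d-1$ — is the technical heart, and it is exactly the kind of estimate that \cite{BFS} supplies and that the paper promises to make explicit for $e=2$.
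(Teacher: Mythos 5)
Your overall strategy (reduce to the lattice parametrizing $\max(\Lambda)$, use Proposition 3.1, exploit that the hub is an affine function of $m$, and bound the degree of the finitely many bad points) is the same general route as the paper, but three steps as written do not hold up. First, the opening reduction rests on the claim that a $\delta$-shift preserves the defect; it does not. Since $(\Lambda\mid\delta)=r$ (the level) while $(\alpha\mid\delta)=(\delta\mid\delta)=0$, one has $\defect(\lambda-k\delta)=\defect(\lambda)+kr$. This is not a cosmetic slip: if the defect really were constant along the $\delta$-direction, then every line in your "union of finitely many lines" would carry defect-$d$ vertices of arbitrarily large degree with all $|\theta_i|\le d-1$, your choice of $N(d)$ (one more than the maximum of the \emph{minimal} degrees along the lines) would not cover them, and the proposition would in fact be false. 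What saves the statement is precisely the fact you negated: for fixed $d$ the shift above a maximal weight is at most $\lfloor d/r\rfloor$ steps. Relatedly, your kernel computation conflates two lattices: on $M\cong\mathbb Z^\ell$ (where $m_0=0$ is already normalized) the map $m\mapsto(2m_i-m_{i-1}-m_{i+1})_i$ is injective; the vector $(1,\dots,1)$ lies in the kernel only for the map on the full content lattice $\mathbb Z^e$. In $M$ the preimage of a bounded set of hubs is finite — a bounded simplex, which is exactly what the paper constructs via the weights $\nu'_i$ — not a union of lines.

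Second, your bad region $\{|\theta_i|\le d-1 \text{ for all } i\}$ characterizes failure of externality, but the proposition requires more: the vertex must sit at the \emph{high-degree} end of a string, which needs some $\theta_i\le -d$ (then Proposition 3.1 gives $\tilde f_i(b)=0$, and $\theta_i<0$ forces $\varepsilon_i(b)>0$, so the $i$-string genuinely descends to lower degree). A vertex with one large positive component and no component $\le -d$ escapes your box yet is external only at the low-degree end. You flag this obstacle, but the proposed repair does not work as stated: one step along an $i$-string changes a single content entry, hence the degree by exactly $1$, and the assertion that defect-$<d$ vertices with hub in a fixed box occur only in bounded degree is essentially the finiteness you are trying to prove. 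The clean fix — and the paper's route — is to take the bad region to be $\{m : \theta_i(m)>-d \text{ for all } i\}$ from the outset: since $\sum_i\theta_i=r$ is fixed, this condition already confines every component to $(-d,\,r+\ell d)$, so by injectivity of the hub map the region in $M$ is finite, and outside it some $\theta_i\le -d$ gives externality on the correct side immediately; the paper realizes this region concretely as the simplex spanned by the $\nu'_i$ and takes $N(d)$ from the finitely many defect-$d$ vertices over it.
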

\begin{proof} By Prop. 3.1, it suffices to show that we can find some degree such that for every hub in that and all greater degrees, there is a component $\theta_i$ of the hub satisfying $\theta_i  \leq -d$. Then the vertex is an external vertex because $d \leq -\theta_i =|\theta_i|$ and lies at the high-degree end of the  $i$-string because $\theta_i$ is negative.

We first show that for any $d$, and any $i$, we can find an weight $\nu'_i$ in $\max(\Lambda)$ such that $\theta_j \leq -d$ for all $j \neq i$.  We first define $\nu_i$ to be $-(d+1)$ for $j \neq i$, and $r+\ell  (d+1)$ for component $i$, which is a weight of level $r$.  Let $\psi$ be the hub of $\Lambda-\nu_i$, which is of level $0$ since the level is additive.  By Proposition 3.6 of \cite{BFS}, in order for $\psi$ to lie in $Q$, we need
\[
\psi_1+2\psi_2+\dots+\ell \psi_\ell \equiv 0 \bmod e
\]
If it is congruent to $j$ with $j \neq 0$, then we can subtract $1$ from $\psi_j$ and add $1$ to $\psi_0$, getting $\psi'$ which is still of level $0$ but satisfies the condition.  We then define

\[
\nu'=\Lambda-\psi'.
\]
With the appropriate $\delta$-shift, this is the weight of some element of $\max(\Lambda)$ by Theorem 2.7 of \cite{BFS}.  

For any $i$, it may happen that $\nu_i'=\nu_i$.  If not, they differ in exactly two values of $\theta_t, 0 \leq t \leq \ell$.  The value of $\theta_0$ will be one less, and the value of one of the other $\theta_t$ for
$0 < t  \leq \ell$ will have $1$ added to it, since we subtracted $1$ from $\psi$ at that coordinate.  This means that if $\theta_0$ is negative, it is $-(d+1)$ or $-(d+2)$ and for any other $t>0$, if it is negative, it is $-(d+1)$ or $-d$.  In all cases, the negative values are less than or equal to $-d$.

The matrix $M$ with rows given by the entries in the $\nu_i$ has all rows summing to $r$. Then the matrix $M'$ with rows $\nu_i'$ also has rows summing to $r$, since we added a one and subtracted a one from two of the entries (possibly both the same entry). At this point we apply a theorem originally published by L\'evy in 1881 and rediscovered many times since, that a strictly diagonally dominant matrix is non-singular \cite{T}, so the rows of $M'$ determine a simplex. In the references in \cite{T}, Taussky lists 25 different papers in which proofs of this theorem have been given.

 If $i=0$, then  $\theta_0$ is determined by $-m_1-m_\ell$. By definition this is positive on $\nu'_0$ and negative at every corner of the $0$-face of the simplex, so that all the points on the opposite side of the set with $\theta_0 = -d$ have $0$-component $\theta_0 \leq -d$.  We conclude that the finite set of points for which none of the hub components is less than or equal to $-d$ must lie inside the simplex, which contains a finite number of points, with a finite maximum degree which we denote by $N(d)$. 

Any vertex $b$ of defect $d$ outside the simplex, since it has some $i$ with $\theta_i \leq -d$,  must be $i$-external, so after reflection in the $i$-string we get a weight $b'$ of lower degree.   If $b'$ is not yet in the simplex, there will be a $j$ with $\theta_j \leq -d$.  We continue in this fashion until we get to a degree lower than $N(d)$.

\end{proof}

\begin{corl*} There are only a finite number of Morita equivalence classes of blocks of cyclotomic Hecke algebras $H^\Lambda_n$, $n \in \mathbb Z_+$.
	\end{corl*}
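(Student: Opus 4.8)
The plan is to read this corollary as the cyclotomic analogue of Donovan's conjecture, i.e.\ \emph{with the defect fixed}: for a fixed dominant integral weight $\Lambda$ and a fixed non-negative integer $d$, there are only finitely many Morita equivalence classes among the blocks of the algebras $H^\Lambda_n$, $n\in\mathbb Z_+$, whose defect equals $d$ (since blocks of different defect are not Morita equivalent, and $d$ is unbounded for fixed $\Lambda$, this is the only reasonable reading, and it is exactly what the abstract promises). Under the Chuang--Rouquier categorification the block of $H^\Lambda_n$ attached to a weight $\lambda=\Lambda-\alpha\in P(\Lambda)$ with $\alpha$ of height $n$ has defect $\defect(\lambda)$, and in each fixed degree $n$ the reduced crystal $\hat P(\Lambda)$ has only finitely many vertices; so it suffices to show that every block of defect $d$ is Morita equivalent to one of bounded degree.

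The combinatorial input is the preceding proposition: it furnishes a bound $N(d)$ such that for every weight $\lambda$ of defect $d$ lying in degree $n\ge N(d)$ there is a residue $i$ with $\theta_i=\langle\lambda,h_i\rangle\le -d$. By Proposition~\ref{bound} such a $\lambda$ is $i$-external, and since $\theta_i<0$ it sits at the high-degree end of its $i$-string; reflecting that string by the simple reflection $s_i\in W$ therefore carries $\lambda$ to a weight $s_i\lambda$ of \emph{strictly smaller} degree (the downward $i$-string from $\lambda$ is genuinely nonempty and gets folded to lower degree) while $\defect(s_i\lambda)=\defect(\lambda)=d$ by $W$-invariance of the bilinear form.

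The categorical step is to lift $s_i$ to a Morita equivalence. By Scopes's theorem and its $\mathfrak{sl}_2$-categorical generalization due to Chuang and Rouquier, when the weight $\lambda$ is $i$-external the reflection $s_i$ is realized on block categories by a single divided-power functor $E_i^{(|\theta_i|)}$ (or $F_i^{(|\theta_i|)}$) which in this extreme-end situation is not merely a derived equivalence but an honest Morita equivalence. Applying this, the block of $H^\Lambda_n$ at $\lambda$ is Morita equivalent to the block at $s_i\lambda$, which lies in strictly smaller degree and the same defect. Since the degree is a non-negative integer, well-founded induction on degree shows every block of defect $d$ is Morita equivalent to a block of defect $d$ and degree $<N(d)$; there are only finitely many vertices of $\hat P(\Lambda)$ in degrees below $N(d)$, hence only finitely many such blocks, hence finitely many Morita classes of defect-$d$ blocks.

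The main obstacle is precisely the upgrade from a derived equivalence to a Morita equivalence: the Chuang--Rouquier machinery attaches a derived equivalence to \emph{every} reflection, and genuine Morita equivalence uses that we are at an extreme end of the $i$-string in exactly the form of Scopes's hypothesis. So the delicate point is to confirm that the conclusion of Proposition~\ref{bound} — $\theta_i\le -d$ forcing $i$-externality of $\lambda$ — supplies the combinatorial input that Scopes and Chuang--Rouquier actually require, in particular that \emph{every} Kashiwara-crystal vertex lying above the weight $\lambda$ (not merely the single vertex $\lambda$ of the reduced crystal $\hat P(\Lambda)$) is annihilated by $\tilde f_i$, so that the operative functor is the full reflection functor and the resulting equivalence is Morita and not only derived. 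The remaining ingredients — finiteness of $\hat P(\Lambda)$ in each degree, $W$-invariance of the defect, and well-foundedness of the downward induction on degree — are routine.
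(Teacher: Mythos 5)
Your argument is correct, and its combinatorial skeleton is exactly the paper's: use Proposition 3.1 plus the bound $N(d)$ from Proposition 3.2 to see that every weight of defect $d$ in degree $\ge N(d)$ has some $\theta_i\le -d$, hence sits at the high-degree end of its $i$-string, reflect down by $s_i$ (which preserves defect), induct on degree, and invoke finiteness of the reduced crystal below $N(d)$; you also correctly read the statement per fixed defect, which is the only reading under which it is true and is what the abstract's reference to Donovan's conjecture intends. Where you genuinely diverge is the categorical upgrade from derived to Morita equivalence. You invoke the Scopes--Chuang--Rouquier result directly: at an $i$-external weight the reflection is realized by a single divided-power functor $E_i^{(|\theta_i|)}$ or $F_i^{(|\theta_i|)}$, which is already a Morita equivalence --- this is in fact the route the paper's introduction advertises. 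The paper's proof of the corollary instead argues via the Chuang--Rouquier adjunction that the derived equivalence attached to $s_i$ sends simples to simples (each appearing with multiplicity $w!$ after induction along the string), and then applies Linckelmann's theorem that an equivalence of this kind between \emph{symmetric} algebras preserving simple modules is a Morita equivalence, which forces the additional input that cyclotomic Hecke algebras of type $A$ are symmetric (Malle--Mathas). So your route buys economy --- no Linckelmann, no symmetry of $H^\Lambda_n$ --- at the price of citing the categorified Scopes theorem in full strength, while the paper's route makes visible the mechanism (bijection on simples plus symmetry) behind that theorem. The "delicate point" you flag, that every Kashiwara crystal element of the weight, not just the vertex of $\hat P(\Lambda)$, must be killed by the relevant operator, is in fact automatic here: Proposition 3.1 operates at the level of weights, showing $\lambda-\alpha_i\notin P(\Lambda)$ (when $\theta_i<0$), so $\tilde f_i$ annihilates every basis element of weight $\lambda$, which is precisely the hypothesis Scopes and Chuang--Rouquier require.
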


\begin{proof}
In \cite{CR}, not only did Chuang and Rouquier prove the categorification theorem for the cyclotomic Hecke algebras $H^\Lambda_n$, they also prove an adjunction theorem.  If $\mu$ is a basis element of the crystal $B(\Lambda)$ with weight $\lambda$  which lies at the end of an $i$-string in the reduced crystal, and $S$ is the corresponding simple module in the cyclotomic Hecke algebra, then there is an idempotent $e$ in the blocks (of the same defect) corresponding to the image under the action of the Weyl group element $s_i$ which cuts out a block corresponding to $\mu$ under the derived equivalence.  The induced module that one gets by going down the string is in fact $w!$ copies of this block, so we get $w!$ copies of each simple.  A derived equivalence which gives a one-to-one correspondence between simples is, by a result of Linckelmann \cite{Lin}, a Morita equivalence.   To apply the Linckelmann result, we must know that the cyclotomic Hecke algebras of type A are symmetric.  This is apparently well-known.  Malle and Mathas give a sketch of the proof in their introduction \cite{MM}.	
\end{proof}

\begin{figure}[ht]
\centering
\includegraphics[scale=0.5]{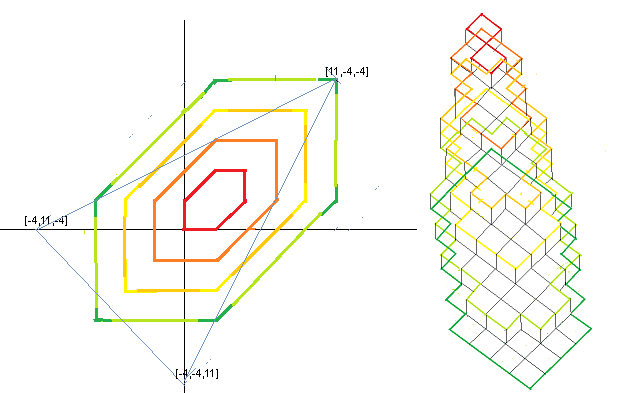}
\captionsetup{labelformat=empty}
\caption{Figure 1.  The reduced crystal for $ e=3, \Lambda=\Lambda_0+\Lambda_1+\Lambda_2$}
\end{figure}
In Figure 1, on the left is the two dimensional lattice $M = \{m=(m_1,m_2)|m_i \in \mathbb Z\}$ corresponding to elements of $\max(\Lambda)$.  The latice is  embedded in the real plane, with the $m_1$-axis horizontal and the $m_2$-axis verical. The hexagonal contours from inside outward contain points with  $\delta$-shifts $s(m)$ of $0$, $1$, $2-3$, and $4-5$. Each point is also associated to a hub by the formula $\hub(m)=(1+m_1+m_2,1- 2m_1+m_2,1 +m_1-2m_2)$, which sums to $3$, and we have labelled the points with the hubs.   Outside of the superimposed triangle, every hub has a component $\leq -4$.

In Figure 1, on the right,  we give a truncation of  the reduced, three-dimensional crystal, in which edges bounding a region with fixed $\delta$-shift are emphasized. Horizontal lines in the reduced crystal  correspond to fixed degree.

\section{EXTERNAL VERTICES OF THE REDUCED CRYSTAL FOR $e=2$}

  In the case of $e=2$,  the lattice is in one-to-one correspondence with  the integers and we can give an explicit bound  $N'(d)$ on the degree such that all Morita equivalence classes occur for degrees less than $N'(d)$. Since $e=2$, then $\ell=e-1=1$. For the simple roots, we take
	\[
	\alpha_0=2\Lambda_0-2\Lambda_1+\delta
	\]
	\[
	\alpha_1=-2\Lambda_0+2\Lambda_1
	\]
	
We begin by giving a description of the set $\max(\Lambda)$, with weights, hubs, defect and contents. 

We will make extensive use of \cite{BFS}, in which every weight in $\max(\Lambda)$ is described as a transformation by an element of the abelian subgroup $T$ of the Weyl group $W$ of one of the elements of a set $N$ of representatives of $T$-orbits.  In order to give a very explicit description of the set $N$ in the case $e=2, M \cong \mathbb Z$, we will use an different than normal representation of the integers in the form $m=qr+u$, chosen so that all the hubs with all components non-negative will correspond to integers $m$ with $q=0$.  In order to acheive this, we will have to take the unusual step of allowing negative remainders $u$ in certain cases, as described in the following lemma.

\begin{lem}
Let $e=2$ and $\Lambda=a_0\Lambda_0+a_1\Lambda_1$, with $r=a_0+a_1$.  Set $b_0=\lfloor \frac{a_0}{2} \rfloor$.
Write the integers in the form $m=qr+u$, where $-b_0 \leq u \leq r-b_0-1$.
Set 
\[
\eta_m=(a_0+2m)\Lambda_0+(a_1-2m)\Lambda_1-t(m)\delta,
\]
where 
\[
t(m)=\max(-u,0,u-a_1)-q(a_1-2u)+q^2r.
\]
Then
\begin{enumerate}
	\item $\hub(\eta_m)=[a_0+2m,a_1-2m]$,
	\item $\max(\Lambda)=\{\eta_m|m \in \mathbb Z \}$
	\item $\defect(\eta_m)=max(-u(a_0+u),u(a_1-u),(u-a_1)(r-u))$,
	\item $\cont (\eta_m)=(s(m),s(m)+m )$
	\item $\deg(\eta_m)=2s(m)+m$.
\end{enumerate}
\end{lem}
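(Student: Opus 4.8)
The plan is to verify each of the five assertions in turn, leaning on the general formulas from Section 3 specialized to $e=\ell+1=2$, so that $M\cong\mathbb Z$ and $m=m_1$ is a single integer with $m_0=0$. First I would record the general hub formula $\hub(\eta_m)=(2m_i-m_{i-1}-m_{i+1})_{i=0}^{\ell}$; for $e=2$ the indices are taken mod $2$, so $m_{-1}=m_1=m$ and $m_2=m_0=0$, giving $\theta_0=2m_0-m_{-1}-m_1=-2m$ and $\theta_1=2m_1-m_0-m_2=2m$. Adding the contribution of $\Lambda=a_0\Lambda_0+a_1\Lambda_1$ (whose hub is $[a_0,a_1]$) yields $\hub(\eta_m)=[a_0-2m,a_1-2m]$ — wait, this must be reconciled with the sign convention in the statement; the statement writes $\eta_m=\Lambda-(m_1\alpha_1+\dots)$, so one should track the sign of $m$ carefully and the claimed $[a_0+2m,a_1-2m]$ comes from substituting $m\mapsto -m$ or from the opposite sign convention for the lattice coordinate. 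Once the bookkeeping is fixed, item (1) is immediate, and item (4) follows from the content formula $\cont(\eta_m)=(s(m),s(m)+m_1,\dots)$ from Section 3, which for $e=2$ reads $(s(m),s(m)+m)$; item (5) is then just $\deg(\eta_m)=c_0+c_1=2s(m)+m$.

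The substantive points are (2) and (3). For (3), I would argue that by the main theorem of \cite{BFS} the map $m\mapsto \eta_m$ is a bijection from $\mathbb Z$ onto $\max(\Lambda)$, so the only thing to check is that the particular $s(m)$ written down is the correct $\delta$-shift, i.e. the \emph{largest} $s$ for which $\Lambda-(2m\text{-shift})-s\delta$ still lies in $P(\Lambda)$. The standard way to pin this down: among the weights in a fixed $i$-string passing through the relevant region, the one in $\max(\Lambda)$ is characterized by having all coefficients $c_j$ nonnegative with at least one equal to $0$ (this is the ``$\max$'' condition, already used right after Lemma~3.1 in the excerpt). Writing $m=qr+u$ with $-b_0\le u\le r-b_0-1$ is exactly the reduction that isolates the residue $u$ controlling which coefficient vanishes; I would substitute the proposed $s(m)$ into $\cont(\eta_m)=(s(m),s(m)+m)$, check nonnegativity, and check that decreasing $s$ by $1$ makes one coordinate negative. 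The three-term $\max$ in the formula for $s(m)$ corresponds to the three regimes $u<0$, $0\le u\le a_1$, and $u>a_1$ (equivalently $u-a_1>0$), each forcing a different coordinate to be the binding one; the $q$-dependent part $q(a_1-2u)+q^2r$ is the ``periodic'' growth of the shift as one moves $r$ steps along the lattice, and I would verify it by checking the recursion $s(m+r)-s(m)$ matches.

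For (2), the defect, I would use Corollary~3.2 / Lemma~3.1: along the $0$-string and $1$-string emanating from $\Lambda$ the defect of $\Lambda-k\alpha_i$ is $k(a_i-k)$, and more generally the defect is a quadratic function that is invariant under the affine Weyl group up to the explicit $k(w-k)$ corrections of Lemma~3.1. The cleanest route is to compute $\defect(\eta_m)$ directly from the definition $\defect(\lambda)=(\Lambda\mid\alpha)-\tfrac12(\alpha\mid\alpha)$ with $\alpha=s(m)\alpha_0+(s(m)+m)\alpha_1$, using the $e=2$ Cartan matrix $\begin{pmatrix}2&-2\\-2&2\end{pmatrix}$ (so $(\alpha_0\mid\alpha_0)=(\alpha_1\mid\alpha_1)=2$, $(\alpha_0\mid\alpha_1)=-2$, and $(\Lambda\mid\alpha_i)=a_i$), which makes $(\alpha\mid\alpha)=2(c_0-c_1)^2=2m^2$ and $(\Lambda\mid\alpha)=a_0 c_0+a_1 c_1$. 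Plugging in $c_0=s(m)$, $c_1=s(m)+m$ and then substituting the three cases of $s(m)$ should collapse, after routine algebra, to the three quantities $-u(a_0+u)$, $u(a_1-u)$, $(u-a_1)(r-u)$, with the $q$-terms cancelling exactly because $\defect$ is $\delta$-translation invariant (a weight and its $\delta$-shift have the same defect is false — rather, the defect grows, so I must be careful: in fact along a maximal element the $q$-part of $s(m)$ is precisely engineered so that $\defect$ depends only on $u$, and confirming that cancellation is the one genuinely delicate computation).

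\medskip

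The main obstacle I anticipate is bookkeeping discipline rather than conceptual difficulty: getting the sign conventions for the lattice coordinate $m$ consistent between the $\Lambda-\sum m_i\alpha_i$ convention and the displayed $\hub=[a_0+2m,a_1-2m]$, and then carrying the three-case $\max$ in $s(m)$ through the quadratic defect computation without arithmetic slips. I would organize the proof by first proving (1), (4), (5) as formal consequences of the $e=2$ specialization of the \cite{BFS} formulas, then proving (3) by the nonnegativity/minimality characterization of $\max(\Lambda)$, and finally (2) by direct substitution into $\defect(\lambda)=(\Lambda\mid\alpha)-\tfrac12(\alpha\mid\alpha)$ with a short case analysis on the residue $u$.
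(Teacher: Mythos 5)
Your plan for items (1), (4), (5) and your direct quadratic computation for (2) are fine in principle (the defect does reduce to $rs(m)+a_1m-m^2$ and the $q$-terms do cancel once the conventions are straightened out), but your route to item (3) has a genuine gap, and (3) is the load-bearing claim. You propose to certify that the displayed $s(m)$ is the correct $\delta$-shift by the test ``content nonnegative with at least one coordinate equal to $0$'', equivalently ``decreasing $s$ by one makes a coordinate negative''. That is only the \emph{sufficient} direction of the remark after Lemma 3.1 (some $c_j=0$ forces $\lambda+\delta\notin P(\Lambda)$); it is not a characterization of $\max(\Lambda)$, and it cannot detect membership in $P(\Lambda)$ at all. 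The paper's own Example 1 ($\Lambda=2\Lambda_0+\Lambda_1$) defeats it: the maximal weight $\eta_2$ has content $(1,3)$, with no zero coordinate, while the candidate one $\delta$ higher, $\Lambda-2\alpha_1$ with content $(0,2)$, has nonnegative content and passes your ``decrease $s$ by one'' test, yet is not a weight of $V(\Lambda)$ (the $1$-string through $\Lambda$ has length $a_1=1$). So your criterion would select $s=0$ where the correct shift is $s=1$. What is actually needed is either the genuine membership criterion (Theorem 2.7 of BFS) or, as the paper does, a Weyl-group argument: identify $N_0=P_+\cap\max(\Lambda)$ with the dominant weights on the two strings issuing from $\Lambda$, act by $s_1$ to fill out the $q=0$ range, and verify the explicit identity $\eta_{qr+u}=t_{-q\alpha_1}(\eta_u)$; since $\max(\Lambda)=W\cdot N_0$ by Kac, this proves (3), and since $W$ preserves defect it simultaneously gives the $q$-independence of the defect, i.e. (2), without your case analysis.

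Two secondary points. First, where your route and the paper's genuinely differ on (2), yours (direct substitution into $\defect=(\Lambda\mid\alpha)-\tfrac12(\alpha\mid\alpha)$) is a legitimate and arguably more elementary alternative to the paper's invariance argument, but it presupposes (3) to be meaningful for $\max(\Lambda)$, so it cannot replace the missing step. Second, your suspicion about sign bookkeeping is well founded and should be pursued rather than waved through: as printed, the formula $s(m)=\max(-u,0,u-a_1)+q(a_1-2u)+q^2r$ together with $+s(m)\delta$ is inconsistent with item (4) and with the table in Example 1 (for $m=2$ one gets $s=7$ instead of the correct $s=1$); the consistent version has the $\delta$-coefficient $-s(m)$ and the middle term $-q(a_1-2u)$ (equivalently, write $m=u-qr$). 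Your own cancellation check in (2) would expose exactly this, so carrying it out carefully is not optional polish but the way to fix the statement you are proving.
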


\begin{proof} The weights $\eta_m$ are given numerically, and the main point of the proof is to show that the weights given by these formulae actually lie in 
$P(\Lambda)$ and, in fact, give all the elements of $\max(\Lambda)$. That done, we also give the values of the various other invariants, where the calculation of the content shows that they coincide with the $\eta_m$ defined in \S3.  

To provide some orientation before starting the proof, note that when $m=0$, we have $q=0,u=0,$ so $\eta_0=a_0\Lambda_0+a_1\Lambda_1+0 \cdot \delta=\Lambda$, which is indeed an element of $\max(\Lambda)$. Then $\eta_{-1}=\Lambda-\alpha_0$ and $\eta_1=\Lambda-\alpha_1$ also lie in $\max(\Lambda)$.  
	
	\begin{enumerate}
    	\item In the formula for $\eta_m$, the coefficient of $\Lambda_0$ is $a_0+2m$ and the coefficient of $\Lambda_1$ is $a_1-2m$. The hub is defined to be this  pair of coefficients of the $\Lambda_i$ in $\eta_m$.
    	\item  
  Let $ b_1=\lfloor \frac{a_1}{2} \rfloor$. As shown earlier, all the vertices on the two strings going out from the highest weight  $\Lambda$ must lie in $\max(\Lambda)$.
  The set of dominant integral weights $N_0=P_+\cap  \max(\Lambda)$ form a subset of  these two strings. The $0$-string has length $a_0$ and the $1$-string has length $a_1$. The weights in $N_0$  have hubs $[a_0-2b_0, a_1+2b_0],\dots,[a_0,a_1],\dots, [a_0+2b_1,a_1-2b_1]$, and are obtained from $\Lambda$ by adding  $u\alpha_i$ out to half of each $i$-string.
  
  The finite Weyl group $\mathring{W}$ has two elements, the identity and $s_1$. As in \cite{BFS},  let $N$ be the orbit of $N_0$ under  $\mathring{W}$. 
  We wish first to show that the elements of $N$ all have the form $\eta_m$ for some $m \in M$.
  \begin{itemize}
  	\item $-b_0 \leq m \leq -1$. Here $q=0, m=u<0$ and $t(m)=-u=|u|$. The weights of the form $\Lambda -k\alpha_0$ have the hub $[a_0-2k,a_1+k]$ and we have subtracted $k$ copies of $\delta$, one for each copy of $\alpha_0$. since $k=|u|$, this gives just the correct value of $t(m)$, so $\eta_u=\Lambda+u\alpha_0$.
  	
  	\item $0 \leq m \leq b_1$.  We still have $q=0$ and $m=u \geq 0$. The weights $\Lambda-u\alpha_1$ have the same hub as $\eta_u$ and $\delta$-shift $0$.  When we calculate $t(m)$ using $q=0$, we get $t(m)=\max(-u,0,u-a_1)=0$.
  	
  	\item $b_1 < m < a_1$.    When we act on the dominant integral weights from the $1$-string by $s_i$, we get the remaining elements of the $1$-string.   Each of these weight is the result of a reflection $s_1$ acting on one of the previous $m$. The weight
  	$\Lambda-m\alpha_1$ had hub $[a_0+2m, a_1-2m]$, which coincides with the hub of $\eta_m$ given in (1), and there are no copies of $\delta$ because here are no copies of $\alpha_0$. It remains to show that $t(m)=0$.  We still have $q=0$ since $m \leq a_1 \leq r-b_0-1$, so $m=u$ and $\max(-u,0,u-a_1)=0$ so we still have $t(m)=0$, as needed.
  	
  	\item $a_1 \leq m \leq a_1+b_0$. 
  	To fill out $N$, we act on the dominant integral weights in the half $0$-string by $s_1$. This give another $b_0+1$ elements of $N$ which lie at the end of $1$-strings.
  	For $\eta_{u}$ with  $-b_0 \leq u \leq 0$ we have $s(u)=t(u)=|u|$.  The hub of $s_1\eta_{u}$ is
  	\[
  	[a_0+2u,a_1-2u]-(a_1-2u)[-2,2]=[a_0+2a_1-2u,-a_1+2u]
  	\]
  	  \noindent which is the hub of $\eta_m$ for $m=a_1-u=a_1+|u|$. The $\delta$-shift of $s_1\eta_{u}$ is also $|u|$ because the action of the Weyl group element $s_1$ does not change $\delta$. To show that $\eta_m=s_1\eta_{u}$, it remains to show that $t(m)=|u|$.
  	  If $-b_0<u\leq 0$ or $u=-b_0$ but $a_0$ is odd, then $q=0$, so 
  	  $t(m)=\max(-m,0,m-a_1)=m-a_1=|u|.$ If $u=-b_0$ but $a_0$ is even, then $m=a_1+a_0-b_0$
  	  =$r-b_0$, so
  	  \begin{align*}
  	  t(m)=&\max(b_0,0,-b_0-a_1)+(a_1-2(-b_0))+r\\
  	  =&b_0-r+r\\
  	  =&b_0=|u|\\
  	  \end{align*}

    \end{itemize}

\noindent Claim:  $\eta_{qr+u}=t_{-q\alpha_1}(\eta_u), -b_0 \leq u \leq r-b_0-1$.

To demonstrate the claim, we simply calculate the action of $t_{-q\alpha_1}$:
\begin{align*}
t_{-q\alpha_1}(\eta_u)&=\eta_u+r(-q\alpha_1)-((\eta_u\mid -q\alpha_1)+
\frac{1}{2}q^2(\alpha_1|\alpha_1)r)\delta\\
&=(a_0+2u)\Lambda_0+(a_1-2u)\Lambda_1-max(-u,0,a_1-u)\delta\\
&-qr\alpha_1-	((a_1-2u)(-q)+q^krk,2r)\delta\\
&=(a_0+2(qr+u))\Lambda_0+(a_1-2(qr+u))\Lambda_1\\
&-(\max(-u,0,a_1-u)-q(a_1-2u)+q^2r)\delta\\
&=\eta_{qr+u}		
\end{align*}

This shows that all the $\eta_m$ lie in $\max(\Lambda)$.  However, since $q$ can take any integer value, it also shows that every element of $\max(\Lambda)$ is of the form $\eta(m)$ for some $m$. 

Since defect is preserved by the action of $T$, this also shows that the defect is independent of $q$.

Following 
\cite{BFS}, we want to shrink $N$ to a fundamental region $\bar N$ by taking a single representative of each $T$ orbit. If $a_0$ is odd, then $N=\bar N$ because, as mentioned above, $r-b_0-1=a_1+b_0$ so no element of $N$ can be written with nonzero $q$.  If, however, $a_0$ is even, so that $2b_0=a_0$, then $a_1+b_0=r-b_0$, so 
\[
\eta_{a_1+b_0}=t_{-\alpha_1}(\eta_{-b_0})
\]
and we will not include $\eta_{a_1+b_0}$ in $\bar N$.

\item  The defects are given by  Corollary \ref{defect} as $-u(a_0+u)$ when $u$ in non-positive and by $u(a_1-u)$ when $u$ is non-negative, so we have defects $\eta_u$ for $-b_0 \leq u \leq b_1$ given by $\max(-u(a_0+u), u(a_1-u))$, since each of these parabolas is non-positive where the other is non-negative. Finally, for both $u$ negative and $u$ positive, we have $(u-a_1)$ negative and $r-u$ positive. Thus in the formula for the defect in the lemma, in these two cases we choose one of the first two alternatives.  

  Since action by the Weyl group preserves defect, we obtain the defects for the second half of the $1$-string, $b_1 <m \leq a_1$, where as reflections of $\eta_u$ for $0 \leq u \leq b_1$.
\[
\defect(\eta_{a_1-u})= \defect(\eta_{u})=\max(-u(a_0-u),u(a_1+u))=u(a_1+u)
\]
In this third case, the other two possibilities in the formula are both negative. We have $-u(a_0+u)$ is negative because $u$ is positive, and $(u-a_1)(r-u)$ is negative because 
$u<a_1\leq r$. 

Finally, we have the fourth case, of the $a_1+1 \leq m \leq a_1 + b_0$, so that if we define $u=a_1-m$, then  $-b_0 \leq u \leq -1$. For $u$ in this range, $\defect(\eta_u)=-u(a_0+u)$. Substituting, we get a defect equal
to $(m-a_1)(a_0+a_1-m)$, which is non-negative.  In the given range for $m$, the first two possibilities in the formula for the defect in the lemma, $-m(a_0+m)$ and $m(a_1-m)$, are negative, so if we substitute $r=a_0+a_1$ we get the desired formula in this fourth case. 

These four cases cover all possibilities for $q=0$, and we showed in (2) above that the defect is independent of $q$, so this formula gives the defect for all cases.
\[
\defect(\eta_{u})=\max(-u(a_0+u),u(a_1-u),(u-a_1)(r-u)), -b_0 \leq u \leq r-b_0-1
\] 
since $r-b_0-1 = a_1+b_0$ if $a_0$ is odd and $r-b_0-1<a_1+b_0$ if $a_0$ is even.

 \item Letting $\cont(\eta_m)=(c_0,c_1)$, then by the definition of $\alpha_0$, we get $c_0=s(m)$. Substituting into the formula
 \[
 \eta_m=\Lambda-c_0\a_0-c_1\a_1
 \] 
 and projecting only the first component of the hub gives
 \[
 a_0+2m=a_0-2s(m)+2c_1
 \]
 Eliminating $a_0$ and dividing by $2$, we conclude that $c_1=s(m)+m$, so
 \[
 \cont(\eta_m)=(s(m),s(m)+m)
 \]
\item The degree is the sum of the components of the content.

\end{enumerate}
 \end{proof}

\begin{examp}
	As an example, consider the case of $a_0=2$ and $a_1=1$.  The reduced crystal $\hat P(\Lambda)$ is given in Figure 2. To illustrate the lemma above, we give the results in tabular form.
	
	\begin{center}
		\begin{tabular}{ | l || c | c | c | c | c | c | c | }
			\hline
			m &-3 & -2 & -1 & 0 & 1 & 2 & 3 \\ \hline \hline
			Hub& [-4,7]&[-2,5]&[0,3]& [2,1]& [4,-1] & [6,-3] & [8,-5] \\ \hline
			Defect &0&0&1& 0& 0 & 1 & 0 \\ \hline
			Content &(4,1)&(2,0)&(1,0)&(0,0)&(0,1)&(1,3)&(2,5) \\ \hline
			Degree &5&2&1& 0& 1 & 4 & 7 \\ \hline 
			\hline
		\end{tabular}
	\end{center}
\end{examp}

\noindent \begin{figure*}[h]\label{reduced}
	\centering
	\includegraphics[scale=0.7]{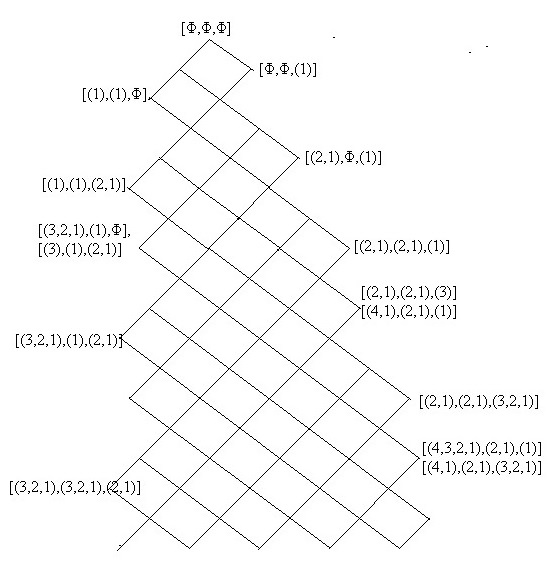}
	\captionsetup{labelformat=empty}
	\caption{Figure 2: $e=2,\Lambda=2\Lambda_0+\Lambda_1$}
\end{figure*}

We now use the information we have accumulated in the case of $e=2$ to give an explicit upper estimate $N'(d)$ for $N(d)$, the bound after which the vertices are all external.

\begin{prop}\label{bound}  Let
	 $\Lambda$ be a dominant integral weight. Assume $r>1$.
	  For any positive integral $d$, set 
	\[
	m_0=-\left \lceil \frac{d+a_0}{2} \right \rceil 
	\]
	\[
    m_1=\left \lceil \frac{d+a_1}{2} \right \rceil 
	\]
	
Then the weights $\eta$ of defect $d$ are all external whenever $\deg(\eta) \geq N'(d)$ for 
\[
N'(d)= \max(2s(m_0)+m_0,2s(m_1)+m_1)
\]
\end{prop}

\begin{proof} 
 By Prop 3.1, a weight $\eta$  of defect $d$ is an $i$-external  vertex of an $i$-string if  $d \leq |\theta_i|$. We want this to hold for both $i$. 
 
 All vertices of defect $0$ are external by Prop. 3.1 since the sum of the components of the hub is the positive integer $r$.  If the weight has only non-negative components and is not of defect zero, then it corresponds under the action of the Weyl group to a weight of higher degree with the same defect and a negative value of one of the $\theta_i$, so we assume henceforward that one of the $\theta_i$ is negative.  The possible hubs have the form $[a_0+2m,a_1-2m]$. If $\theta_0$ is negative, then $a_0 <-2m$, while if $\theta_1$ is negative, then $a_1 < 2m$
  The negative $\theta_i$ always has the smaller absolute value since the $\theta_i$ sum to a positive integer $r$. 
  
  For each side of the reduced crystal, we want to find the last weight $\eta$ of defect $d$ which is still internal, which by Prop. 3.1 means that $d > |\theta_i|$. Then we will add one to the degree.
  If we are at the high degree end of a $0$-string, which is on the left in the sample reduced crystal in Figure 2, we have chosen the non-positive $m_0$ so that $0 \leq -m_0-1 <  \frac{d+a_0}{2} \leq -m_0$ so  we get
 \[
 |a_0+2m_0+2| < d \leq |a_0+2m_0| =2|m_0|-a_0
 \]
 Similarly, from the definition of $m_1$, we have $0 \leq m_1-1 <  \frac{d+a_1}{2} \leq m_1$, so 
  \[
|a_1-2m_1+2| < d \leq |a_1-2m_1|=2m_1-a_1
\]
The integers $m_0,m_1$ are the  integers of smallest absolute value for which the equation holds.

We now convert this into an inequality for the degree.  We substitute for $t(m)$ to get
\[
deg(\eta_{m_0})= 2t(m_0)+m_0  	
\] 
\[
deg(\eta_{m_1})= 2t(m_1)+m_1  	
\]
All weights with defect $d$ are obtained by Weyl group reflections from $\delta$-shifts of weights in $N$. The absolute value of the
negative hub at $m_0+1$ or $m_1-1$ is no more than $1$ less than $d$, so since we have assumed $r>1$, the positive value must be greater than or equal to $d$.  This means that after action by the Weyl group reflection which converts the positive to negative, the absolute values of both of the $\theta_i$ will be greater than or equal to $d$, and the weight will be external by Corollary \ref{defect}. Since we have found the last possible internal weight of defect $d$ on each side, the maximum of the next largest weight must be the first degree after which all weights of defect $d$ are external, giving 

\[
N'(d)= \max(2t(m_0)+m_0,2t(m_1)+m_1)
\]
as desired
\end{proof}

Let us check the sharpness of the bound  $N'(d)$ given in Proposition \ref{bound}. Let $N(d)$ be the sharp bound, the degree from which the vertices of defect $d$ are $i$-external for both $i=0$ and $i=1$ and we compare the two bounds in tabular form for
Example 1.  

	\begin{center}
	\begin{tabular}{ | l || c | c | c | c | c | c | c | c| c | c | }
		\hline
		d &0 & 1 & 3 & 4 & 6 & 7 & 9& 10 &12&13  \\ \hline \hline
		$m_0$ &0&-2&-3&-4&-5&-5&-6&-7&-8&-8 \\ \hline
		$m_1$ &0&2&3&3&4&5&6&6&7&8 \\ \hline  
        $\deg(\eta_{m_0})$&0&2&5&10&15&15&22&31&40&40 \\ \hline
        $\deg(\eta_{m_1})$&0&4&7&7&12&19&26&26&35&46  \\ \hline
        $N(d)$ &0&2&5& 7& 12 & 15 & 22&26 &35&40 \\ \hline
		$N'(d)$ &0& 4&7&10&15&19&26&31&40&46  \\ \hline \hline		
	\end{tabular}
\end{center}
\noindent The actual bound $N(d)$ turns out, in this example, to be the smaller of the two degrees of which we, for safety's sake, have taken the maximum to be our estimate $N'(d)$. Since the difference seems to be polynomial in $d$, this should not make much difference computationally.

\end{document}